\newtheorem{theorem}{Theorem}[section]
\newtheorem{lemma}[theorem]{Lemma}
\newtheorem{corollary}[theorem]{Corollary}
\theoremstyle{definition}
\newtheorem{example}[theorem]{Example}
\theoremstyle{remark}
\newtheorem{remark}[theorem]{Remark}
\numberwithin{equation}{section}
\begin{document}

% \title[short text for running head]{full title}
\title[Ergodic value distribution]{An ergodic value distribution of certain meromorphic functions}

%    Only \author and \address are required; other information is
%    optional.  Remove any unused author tags.

%    author one information
% \author[short version for running head]{name for top of paper}
\author[J. Lee]{Junghun Lee}
\address{Graduate School of Mathematics, Nagoya University, Furo-cho, Chikusa-ku,
Nagoya 464-8602, Japan}
%\curraddr{}
\email{m12003v@math.nagoya-u.ac.jp}
%\thanks{The authors would like to thank to Professor Kohji Matsumoto and the referees.}

%    author two information
\author[A. I. Suriajaya]{Ade Irma Suriajaya}
\address{Graduate School of Mathematics, Nagoya University, Furo-cho, Chikusa-ku,
Nagoya 464-8602, Japan}
%\curraddr{}
\email{m12026a@math.nagoya-u.ac.jp}
\thanks{The authors are supported by JSPS KAKENHI Grant Number 16J01139 and 15J02325, respectively.}

\subjclass[2010]{Primary 11M06, Secondary 37A05 37A30}
%    The 2010 edition of the Mathematics Subject Classification is
%    now available.  If you are citing a classification from the
%    new scheme, use the following input coding instead.

%\date{25 July 2015}
\keywords{Birkhoff's ergodic theorem, zeta function, $L$-function, derivative}

%%% %%% %%% %%%%% %%%%% %%% %%% %%%
%%% %%% %%% %%%%% %%%%% %%% %%% %%%

\begin{abstract}
We calculate a certain mean-value of meromorphic functions by using specific ergodic transformations, which we call affine Boolean transformations.
We use Birkhoff's ergodic theorem to transform the mean-value into a computable integral which allows us to completely determine the mean-value of this ergodic type.
As examples, we introduce some applications to zeta functions and $L$-functions.
We also prove an equivalence of the Lindel\"{o}f hypothesis of the Riemann zeta function in terms of its certain ergodic value distribution associated with affine Boolean transformations.
\end{abstract}

\maketitle

%  Text of article.

%
\section{Introduction}

In \cite{LW09}, M. Lifshitz and M. Weber investigated the value distribution of the {\it Riemann zeta function} $\zeta(s)$ by using the Cauchy random walk.
They proved that almost surely
$$
\lim_{N \rightarrow \infty} {1 \over N} \sum_{n = 1}^{N} \zeta\left({ 1 \over 2} + i S_n \right) = 1 + o\left( \frac{(\log{N})^b}{N^{1/2}} \right)
$$
holds for any $b > 2$ where $\{ S_n \}_{n = 1}^{\infty}$ is the Cauchy random walk.
This result implies that most of the values of $\zeta(s)$ on the critical line are quite small.
Analogous to \cite{LW09}, T. Srichan investigated the value distributions of Dirichlet $L$-functions and Hurwitz zeta functions by using the Cauchy random walk in \cite{Sri15}.

The first approach to investigate the ergodic value distribution of $\zeta(s)$ was done by J. Steuding.
In \cite{ste}, he studied the ergodic value distribution of $\zeta(s)$ on vertical lines under the Boolean transformation.

We are interested in studying the ergodic value distribution of a larger class of meromorphic functions which includes but is not limited to the Selberg class (of $\zeta$-functions and $L$-functions) and their derivatives, on vertical lines under more general Boolean transformations, which we shall call {\it affine Boolean transformation} $T_{\alpha,\beta}: \mathbb{R} \rightarrow \mathbb{R}$ given by 
\begin{equation} \label{eq:affboot}
\begin{aligned}
T_{\alpha, \beta}(x) : = 
\begin{cases}
\displaystyle {\alpha \over 2} \left( \frac{x + \beta}{\alpha} - {\alpha \over x - \beta } \right), &\quad x \neq \beta; \\
\displaystyle \beta, &\quad x = \beta
\end{cases}
\end{aligned}
\end{equation}
for an $\alpha > 0$ and a $\beta \in \mathbb{R}$.
Below is our main theorem.
For a given $c \in \mathbb{R}$, we shall denote by $\mathbb{H}_c$ and $\mathbb{L}_c$ the half-plane $\{ z \in \mathbb{C} \mid \operatorname{Re}(z) > c \}$ and the line $\{ z \in \mathbb{C} \mid \operatorname{Re}(z) = c \}$.

% % %
\begin{theorem} \label{birk}
Let $f$ be a meromorphic function on $\mathbb{H}_c$ satisfying the following conditions.
\begin{enumerate}
\item There exists an $M > 0$ and a $c' > c$ such that for any $t \in \mathbb{R}$, we have
$$ | f ( \{ \sigma + i t \mid \sigma > c' \}) | \leq M. $$
\item There exists a non-increasing continuous function $\nu:(c,\infty)\rightarrow\mathbb{R}$ such that if $\sigma$ is sufficiently near $c$ then $\nu(\sigma)\leq1+c-\sigma$,
and that for any small $\epsilon>0$, $f(\sigma+it) \ll_{f,\epsilon} |t|^{\nu(\sigma)+\epsilon}$ as $|t|\rightarrow\infty$.
\item $f$ has at most one pole of order $m$ in $\mathbb{H}_c$ at $s=s_0=\sigma_0+it_0$, that is, we can write its Laurent expansion near $s=s_0$ as
\begin{equation} \label{eq:laurent}
\frac{a_{-m}}{(s-s_0)^m} + \frac{a_{-(m-1)}}{(s-s_0)^{m-1}} + \cdots + \frac{a_{-1}}{s-s_0} + a_0 + \sum_{n=1}^\infty a_n(s-s_0)^n
\end{equation}
for $m\geq0$, where we set $m=0$ if $f$ has no pole in $\mathbb{H}_c$.
\end{enumerate}
Then for any $s \in \mathbb{H}_c \backslash \mathbb{L}_{\sigma_0}$, we have
\begin{equation} \label{eq:birk}
\lim_{N \rightarrow\infty} \frac{1}{N} \sum_{n = 0}^{N - 1} f\left(s+iT_{\alpha,\beta}^nx\right)
= \frac{\alpha}{\pi} \int_{\mathbb{R}} \frac{f(s+i\tau)}{\alpha^2 +(\tau-\beta)^2} d\tau
\end{equation}
for almost all $x \in \mathbb{R}$.

We denote the right-hand side of the above formula by $l_{\alpha,\beta}(s)$.
%\begin{itemize}
%
%\item
If $f$ has no pole in $\mathbb{H}_c$,
\begin{equation} \label{eq:1}
l_{\alpha,\beta}(s) = f(s+\alpha+i\beta)
\end{equation}
for all $s \in \mathbb{H}_c$.
%
%\item
If $f$ has a pole at $s=s_0=\sigma_0+it_0$,
\begin{equation} \label{eq:2}
\begin{aligned}
l_{\alpha,\beta}(s) =
\begin{cases}
f(s+\alpha+i\beta) + B_m(s_0), &\,\, c<\operatorname{Re}(s)<\sigma_0, s\neq s_0-\alpha-i\beta;\\
\displaystyle \sum_{n=0}^m \frac{a_{-n}}{(-2\alpha)^n}, &\,\, c<\operatorname{Re}(s)<\sigma_0, s=s_0-\alpha-i\beta;\\
f(s+\alpha+i\beta), &\,\, \operatorname{Re}(s)>\sigma_0;
\end{cases}
\end{aligned}
\end{equation}
where
$$
B_m(s_0) = \sum_{n=1}^m \frac{a_{-n}}{i^n \left( \beta+i\alpha-i(s-s_0) \right)^n}
- \sum_{n=1}^m \frac{a_{-n}}{i^n \left( \beta-i\alpha-i(s-s_0) \right)^n}.
$$
Moreover when $m=1$, we can extend the result in \eqref{eq:birk} to the line $\mathbb{L}_{\sigma_0}$ by setting
\begin{equation} \label{eq:3}
l_{\alpha, \beta}(\sigma_0 + i t) = 
f(\sigma_0+\alpha+i(t+\beta)) - \frac{a_{-1}\alpha}{\alpha^2 + (t_0-t-\beta)^2}
\end{equation}
for any $t \in \mathbb{R}$.
%
%\end{itemize}
\end{theorem}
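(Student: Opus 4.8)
The plan is to realize $T_{\alpha,\beta}$ as a measure-theoretic conjugate of the classical Boolean transformation, transfer its ergodicity, apply Birkhoff's pointwise ergodic theorem to get \eqref{eq:birk}, and finally evaluate the Cauchy-type integral $l_{\alpha,\beta}(s)$ by residues. First I would write $\phi(x)=\alpha x+\beta$ and let $T(x)=\tfrac12(x-1/x)$ denote Boole's transformation; a direct computation shows $T_{\alpha,\beta}=\phi\circ T\circ\phi^{-1}$. Via the Cayley transform $x\mapsto (x-i)/(x+i)$ the map $T$ is conjugate to the doubling map on the circle, hence exact and ergodic with respect to the standard Cauchy measure $\tfrac1\pi\,dx/(1+x^2)$; pushing this forward by $\phi$ shows that $T_{\alpha,\beta}$ is ergodic with respect to $d\mu_{\alpha,\beta}(\tau)=\tfrac{\alpha}{\pi}\,d\tau/(\alpha^2+(\tau-\beta)^2)$, which is exactly the measure appearing on the right of \eqref{eq:birk}.

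Next, fixing $s\in\mathbb{H}_c\backslash\mathbb{L}_{\sigma_0}$, I would verify that $\tau\mapsto f(s+i\tau)$ lies in $L^1(\mu_{\alpha,\beta})$. Condition (2) gives $|f(s+i\tau)|\ll_{f,\epsilon}|\tau|^{\nu(\operatorname{Re}(s))+\epsilon}$, and since $\operatorname{Re}(s)>c$ the monotonicity of $\nu$ together with the bound $\nu(\sigma)\le 1+c-\sigma$ near $c$ forces $\nu(\operatorname{Re}(s))<1$; choosing $\epsilon<1-\nu(\operatorname{Re}(s))$ makes the integrand decay faster than $|\tau|^{-1}$ against the $|\tau|^{-2}$ kernel, giving integrability. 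Applying Birkhoff's theorem to the real and imaginary parts of $f(s+i\cdot)$ then yields the $\mu_{\alpha,\beta}$-almost-everywhere convergence of the ergodic averages to $\int f(s+i\tau)\,d\mu_{\alpha,\beta}$; because $\mu_{\alpha,\beta}$ and Lebesgue measure are mutually absolutely continuous, this upgrades to Lebesgue almost every $x$, which is \eqref{eq:birk}.

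The explicit formulas \eqref{eq:1}, \eqref{eq:2} then come from a contour computation for $l_{\alpha,\beta}(s)$. As a function of $\tau$, $F(\tau):=f(s+i\tau)$ is meromorphic on $\operatorname{Im}\tau<\operatorname{Re}(s)-c$, is bounded by $M$ on $\operatorname{Im}\tau<\operatorname{Re}(s)-c'$ by condition (1), and grows at most polynomially along vertical directions by condition (2); these two facts let me close the contour by a large semicircle in the lower half-plane with vanishing arc contribution. The kernel has simple poles at $\tau=\beta\pm i\alpha$, and the enclosed one at $\beta-i\alpha$ always contributes $f(s+\alpha+i\beta)$. The pole of $f$ transplants to $\tau_0=i(s-s_0)$ with $\operatorname{Im}\tau_0=\operatorname{Re}(s)-\sigma_0$, so it is enclosed precisely when $\operatorname{Re}(s)<\sigma_0$: for $\operatorname{Re}(s)>\sigma_0$ only the kernel pole survives, giving \eqref{eq:1} and the third line of \eqref{eq:2}, while for $c<\operatorname{Re}(s)<\sigma_0$ the additional residue at $\tau_0$, obtained by partial-fractioning the kernel as $\tfrac1{2i\alpha}\bigl(\tfrac1{\tau-\beta-i\alpha}-\tfrac1{\tau-\beta+i\alpha}\bigr)$ and pairing against the Laurent tail \eqref{eq:laurent}, is exactly $B_m(s_0)$. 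The boundary point $s=s_0-\alpha-i\beta$ is the degenerate case in which $\tau_0$ collides with $\beta-i\alpha$, producing a single pole of order $m+1$ whose residue collapses to $\sum_{n=0}^m a_{-n}/(-2\alpha)^n$.

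Finally, for \eqref{eq:3} I would treat $m=1$ on $\mathbb{L}_{\sigma_0}$, where $\tau_0$ is real and the integrand acquires a simple pole on the contour, so that $l_{\alpha,\beta}$ exists only as a principal value. The key observation is that across $\mathbb{L}_{\sigma_0}$ the function $l_{\alpha,\beta}$ jumps by the full residue $2a_{-1}\alpha/(\alpha^2+(t_0-t-\beta)^2)$—indeed the one-sided limit from $c<\operatorname{Re}(s)<\sigma_0$ carries the $B_1$ term while the limit from $\operatorname{Re}(s)>\sigma_0$ does not—so the principal value, being the average of the two one-sided limits, retains exactly half of it, which is \eqref{eq:3}. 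I expect the genuine obstacle to be here: since $f(s+i\cdot)$ is no longer in $L^1(\mu_{\alpha,\beta})$ on the line, Birkhoff's theorem does not apply verbatim, and one must show separately that the ergodic average still converges almost everywhere to this principal value. This requires controlling the orbit terms $T_{\alpha,\beta}^n x$ that approach the pole, exploiting the odd symmetry of the singular part $a_{-1}/(i(\tau-\tau_0))$ together with a truncation argument, and is the delicate analytic point of the theorem.
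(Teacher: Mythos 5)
Your proposal follows essentially the same route as the paper: conjugate $T_{\alpha,\beta}$ to Boole's transformation by $x\mapsto\alpha x+\beta$, establish ergodicity with respect to the Cauchy measure $\mu_{\alpha,\beta}$, apply Birkhoff's theorem (using the mutual absolute continuity of $\mu_{\alpha,\beta}$ and Lebesgue measure to get the statement for almost all $x$), and evaluate $l_{\alpha,\beta}(s)$ by closing the contour in the lower half $\tau$-plane, collecting the residues at $\tau=\beta-i\alpha$ and $\tau=i(s-s_0)$ and treating the collision at $s=s_0-\alpha-i\beta$ as a single higher-order pole. The only methodological differences are minor: the paper obtains ergodicity of Boole's map from the Adler--Weiss theorem for Lebesgue measure and transfers it to $\mu_{1,0}$ by absolute continuity, whereas you use the cotangent/Cayley conjugacy to the doubling map (both are valid, and yours is arguably more self-contained); and for \eqref{eq:3} the paper computes the indented principal-value contour directly, the small semicircle around $\tau=t_0-t$ contributing half the residue, rather than averaging the two one-sided limits of \eqref{eq:2} as you do --- the two computations agree. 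Your integrability check via $\nu(\operatorname{Re}(s))<1$ and the decay of the Cauchy kernel is exactly what condition (2) is designed for.

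The one step you explicitly leave open --- the almost-everywhere convergence of the ergodic averages on $\mathbb{L}_{\sigma_0}$ when $m=1$, where $\tau\mapsto f(\sigma_0+i(t+\tau))$ has a non-integrable simple pole and hence does not lie in $L^1(\mu_{\alpha,\beta})$ --- is a genuine issue, and you are right that Birkhoff's theorem does not apply verbatim there. You should be aware, however, that the paper does not close this gap either: it asserts that ``setting the values of the integrand to be the principal value'' renders it integrable and then invokes Corollary \ref{002}, which does not address the fact that only the principal value of the integral, not the $L^1$ norm, is finite. So your instinct that this is the delicate analytic point is correct; a fully rigorous proof of \eqref{eq:3} as an ergodic limit (as opposed to an identity for the principal-value integral) would need the kind of additional argument you sketch, e.g.\ a decomposition into an $L^1$ part plus the odd singular part $a_{-1}/(i(\tau-\tau_0))$ together with control of the orbit's approach to $\tau_0$. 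For the rest of the theorem (all $s\notin\mathbb{L}_{\sigma_0}$) your argument is complete and matches the paper's.
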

% % %

% % %
%\begin{remark}
%Even though the calculation is somewhat cumbersome, by the same method, we can also extend these results to the case when $f$ has finitely many poles.
%When considering this case, writing the Laurent expansion of $f$ near each pole $s=s_j$ in the form
%$$
%\frac{a_{j,-m_j}}{(s-s_j)^{m_j}} + \frac{a_{j,-(m_j-1)}}{(s-s_j)^{m_j-1}} + \cdots + \frac{a_{j,-1}}{s-s_j} + a_{j,0} + \sum_{n=1}^\infty a_{j,n}(s-s_j)^n,
%$$
%we need to omit the lines $\mathbb{L}_{\sigma_{j(1)}}, \cdots, \mathbb{L}_{\sigma_{j(r)}}$ where $m_{j(1)}, \cdots, m_{j(r)}>1$.
%The reason can be seen in Case 3 of our proof.
%
%In this paper, we omit the proof of these cases.
%\end{remark}
% % %

%
In the next section, we first give a few examples as applications of our main theorem, Theorem \ref{birk}, to the Riemann zeta function, Dirichlet $L$-functions, Dedekind zeta functions, Hurwitz zeta functions, and their derivatives.
We will briefly review some basics of ergodic theory and see an ergodic property of affine Boolean transformations in Section \ref{sec_aff}.
In Section \ref{sec_proof}, we will complete the proof of Theorem \ref{birk}.
% and Corollary \ref{dir_ser}.

%
\section{Some applications to zeta functions and $L$-functions}

In the following examples, we write $f^{(0)}$ to express $f$ itself and we define
$$
A_k(s) := \frac{(-1)^k k!}{i^{k+1}} \left( \frac{1}{\left( \beta+i\alpha-i(s-1) \right)^{k+1}}
- \frac{1}{\left( \beta-i\alpha-i(s-1) \right)^{k+1}} \right)
$$
for any non-negative integer $k$.

% %1% %
\begin{example}[The Riemann zeta function] \label{birk1}
For any $k \geq 0$ and $s \in \mathbb{H}_{- 1 / 2} \backslash \mathbb{L}_1$, we have
$$
\lim_{N \rightarrow\infty} \frac{1}{N} \sum_{n = 0}^{N - 1} \zeta^{(k)}\left(s+iT_{\alpha,\beta}^nx\right)
= \frac{\alpha}{\pi} \int_{\mathbb{R}} \frac{\zeta^{(k)}(s+i\tau)}{\alpha^2 +(\tau-\beta)^2} d\tau
$$
for almost all $x \in \mathbb{R}$.

Denoting the right-hand side of the above formula by $l_{\alpha,\beta}^{(k)}(s)$, we have
\begin{align*}
l_{\alpha,\beta}^{(k)}(s) = 
\begin{cases}
\zeta^{(k)}(s+\alpha+i\beta) + A_k(s), &\,\, -1/2 < \operatorname{Re}(s)<1, s\neq 1-\alpha-i\beta; \\
\displaystyle (-1)^k\gamma_k - \frac{k!}{(2\alpha)^{k+1}}, &\,\, -1/2 < \operatorname{Re}(s)<1, s = 1-\alpha-i\beta; \\
\zeta^{(k)}(s+\alpha+i\beta), &\,\, \operatorname{Re}(s)>1;
\end{cases}
\end{align*}
where
$$
\gamma_k := \lim_{N\rightarrow\infty} \left( \sum_{n=1}^N \frac{\log^k{n}}{n} - \frac{\log^{k+1}{N}}{k+1} \right).
$$
If $k=0$, we can extend the result to the line $\mathbb{L}_1$ by setting
$$
l_{\alpha,\beta}^{(0)}(1+it) = \zeta^{(0)}(1+\alpha+i(t+\beta)) - \frac{\alpha}{\alpha^2 + (t+\beta)^2}.
$$
\end{example}
% %1% %

Remark that Steuding showed Example \ref{birk1} when $k=0$, $\alpha=1$, and $\beta=0$ thus Example \ref{birk1} is a generalization of \cite[Theorem 1.1]{ste}.

\begin{proof}[Proof of Example \ref{birk1}]
We first note that for any $k\geq0$, $\zeta^{(k)}(s)$ has an absolute convergent Dirichlet series expression when $\operatorname{Re}(s)>1$.
Thus condition (1) of Theorem \ref{birk} is satisfied for any $c'>1$.
From the Laurent expansion of $\zeta(s)$ near its pole $s=1$ (see \cite[Theorem]{bri}), we can deduce the Laurent expansion of $\zeta^{(k)}(s)$ for any $k\geq0$ near $s=1$:
$$
\zeta^{(k)}(s) = \frac{(-1)^k k!}{(s-1)^{k+1}} + (-1)^k\gamma_k + \sum_{n=k}^\infty \frac{(-1)^{n+1}\gamma_{n+1}}{(n-k+1)!} (s-1)^{n-k+1}.
$$
Thus for $k\geq0$, $\zeta^{(k)}(s)$ has a pole of order $k+1$ at $s = 1$.
Moreover, we can show by using \cite[pp. 95--96]{tit2} that
\begin{equation} \label{eq:t-bound}
\zeta^{(k)}(\sigma+it) \ll_{k,\epsilon} |t|^{\mu(\sigma)+\epsilon}
\end{equation}
holds with
\begin{align*}
\mu(\sigma) \leq
\begin{cases}
0 &\text{ if } \sigma>1; \\
(1-\sigma)/2 &\text{ if } 0\leq\sigma\leq1; \\
1/2-\sigma &\text{ if } \sigma<0;
\end{cases}
\end{align*}
for any $k \geq 0$.
Therefore we can apply Theorem \ref{birk} with $c=-1/2$, $s_0=1$, and $m=k+1$ to $\zeta^{(k)}(s)$.
\end{proof}

We can also show that this ergodic mean-value is related to the Lindel\"{o}f hypothesis.
We first show that the Lindel\"{o}f hypothesis can be rewritten in terms of $\zeta^{(k)}(s)$.
% % %
\begin{theorem} \label{lindeloef1}
Let $k\in\mathbb{N}$.
The Lindel\"{o}f hypothesis: For any $\epsilon>0$,
$$
\zeta\left(\frac{1}{2}+it\right) \ll_\epsilon |t|^\epsilon \quad \text{ as } |t|\rightarrow\infty
$$
holds if only if, for any $\epsilon>0$,
$$
\zeta^{(k)}\left(\frac{1}{2}+it\right) \ll_{k,\epsilon} |t|^\epsilon \quad \text{ as } |t|\rightarrow\infty
$$
holds.
\end{theorem}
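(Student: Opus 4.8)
The plan is to prove the two implications separately, in each case comparing the order of growth of $\zeta$ and of $\zeta^{(k)}$ on and immediately around the line $\operatorname{Re}(s)=1/2$.

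For the forward implication (the Lindel\"{o}f hypothesis for $\zeta$ implies the bound for $\zeta^{(k)}$), I would fix $s=\tfrac12+it$ with $|t|$ large and represent the derivative by Cauchy's integral formula
\begin{equation*}
\zeta^{(k)}\!\left(\tfrac12+it\right)=\frac{k!}{2\pi i}\oint_{|w-(1/2+it)|=r}\frac{\zeta(w)}{\left(w-\tfrac12-it\right)^{k+1}}\,dw,
\end{equation*}
so that $\left|\zeta^{(k)}(\tfrac12+it)\right|\le k!\,r^{-k}\max_{|w-(1/2+it)|=r}|\zeta(w)|$. On this circle $\operatorname{Re}(w)$ ranges over $[\tfrac12-r,\tfrac12+r]$ while $\operatorname{Im}(w)=t+O(r)$, and the Lindel\"{o}f hypothesis together with the convexity of the order function and the functional equation gives $\zeta(w)\ll_{\epsilon}|t|^{r+\epsilon}$ there. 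Choosing $r=1/\log|t|$ makes $|t|^{r}=O(1)$ and leaves only the harmless factor $r^{-k}=(\log|t|)^{k}$, whence $\zeta^{(k)}(\tfrac12+it)\ll_{k,\epsilon}(\log|t|)^{k}|t|^{\epsilon}\ll_{k,\epsilon}|t|^{2\epsilon}$; since $\epsilon$ is arbitrary this gives the claimed bound.

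For the converse I would first promote the hypothesized critical-line bound to a uniform bound across a strip. On the line $\operatorname{Re}(s)=2$ the Dirichlet series $\zeta^{(k)}(s)=(-1)^{k}\sum_{n\ge2}(\log n)^{k}n^{-s}$ converges absolutely and is bounded, so $\zeta^{(k)}(2+it)\ll1$, while by hypothesis $\zeta^{(k)}(\tfrac12+it)\ll_{\epsilon}|t|^{\epsilon}$. Using the finite order bound \eqref{eq:t-bound} to license the Phragm\'en--Lindel\"{o}f (three lines) principle on the half-strip $\{\tfrac12\le\operatorname{Re}(s)\le2,\ \operatorname{Im}(s)\ge T_{0}\}$, I would conclude $\zeta^{(k)}(\sigma+it)\ll_{\epsilon}|t|^{\epsilon}$ uniformly for $\tfrac12\le\sigma\le2$. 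Since $\partial_{\sigma}\zeta^{(j-1)}(\sigma+it)=\zeta^{(j)}(\sigma+it)$ and $\zeta^{(j)}(2+it)\ll1$ for every $j$, the horizontal integration
\begin{equation*}
\zeta^{(j-1)}(\sigma+it)=\zeta^{(j-1)}(2+it)-\int_{\sigma}^{2}\zeta^{(j)}(u+it)\,du
\end{equation*}
propagates the uniform bound $\ll_{\epsilon}|t|^{\epsilon}$ on $[\tfrac12,2]$ successively from $j=k$ down to $j=1$; at the final step, putting $\sigma=1/2$ yields $\zeta(\tfrac12+it)\ll_{\epsilon}|t|^{\epsilon}$, which is the Lindel\"{o}f hypothesis.

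The routine ingredients are the Cauchy estimate with its optimized radius and the repeated application of the fundamental theorem of calculus. The main obstacle is the Phragm\'en--Lindel\"{o}f step in the converse: I must secure a crude polynomial (finite order) bound for $\zeta^{(k)}$ throughout the strip, which \eqref{eq:t-bound} supplies, and handle the pole of order $k+1$ at $s=1$, most cleanly by restricting to $\operatorname{Im}(s)\ge T_{0}>0$ (equivalently, by applying the principle to $(s-1)^{k+1}\zeta^{(k)}(s)$ and dividing back) so that the convexity argument runs on a region where $\zeta^{(k)}$ is holomorphic.
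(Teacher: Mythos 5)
Your proposal is correct and follows essentially the same route as the paper: Cauchy's integral formula on a small circle about $\tfrac12+it$ (the paper fixes the radius at $\epsilon/2$ where you take $1/\log|t|$; both work) for the forward implication, and a convexity/Phragm\'en--Lindel\"{o}f extension of the critical-line bound into the strip followed by horizontal integration to descend from $\zeta^{(k)}$ to $\zeta$ for the converse. Your write-up is merely more explicit than the paper's about the Phragm\'en--Lindel\"{o}f step, the handling of the pole at $s=1$, and the $k$-fold iteration of the fundamental theorem of calculus, all of which the paper compresses into a single displayed inequality and the phrase ``this implies.''
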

The above theorem implies that we can restate the Lindel\"{o}f hypothesis as:
$$
\text{For any } \epsilon>0,\,\,
\zeta^{(k)}\left(\frac{1}{2}+it\right) \ll_{k,\epsilon} |t|^\epsilon \quad \text{ as } |t|\rightarrow\infty
$$
for any non-negative integer $k$.

\begin{proof}[Proof of Theorem \ref{lindeloef1}]
Suppose that the Lindel\"{o}f hypothesis is true.
Thus by using the functional equation for $\zeta(s)$,
$$ \zeta(\sigma+it) \ll_\epsilon |t|^{\mu(\sigma)+\epsilon/2} $$
holds for any $\epsilon>0$ with
$$
\mu(\sigma) \leq
\begin{cases}
0 &\text{ if } \sigma\geq1/2; \\
1/2-\sigma &\text{ if } \sigma<1/2.
\end{cases}
$$
Then by Cauchy's integral theorem, for any $k\in\mathbb{N}$ we have
$$
\zeta^{(k)}\left(\frac{1}{2}+it\right)
= \frac{k!}{2\pi i} \int_{\gamma_r} \frac{\zeta(z)}{(z-1/2-it)^{k+1}} dz,
$$
where $\gamma_r := \{ z\in\mathbb{C} \mid |z-1/2-it|=r \}$.
Taking $r=\epsilon/2$,
\begin{align*}
\left| \zeta^{(k)}\left(\frac{1}{2}+it\right) \right|
&\ll_k \int_{\gamma_r} \frac{|\zeta(z)|}{|z-1/2-it|^{k+1}} |dz|
%\ll_{k,\epsilon} |t+\epsilon/2|^{\mu(1/2-\epsilon\/2)+\epsilon/2} \frac{\epsilon/2}{(\epsilon/2)^{k+1}}
\ll_{k,\epsilon} |t+\epsilon/2|^{\mu(1/2-\epsilon/2)+\epsilon/2} \\
&\ll_\epsilon |t|^{\mu(1/2-\epsilon/2)+\epsilon/2}
\leq |t|^{1/2-(1/2-\epsilon/2)+\epsilon/2} = |t|^\epsilon.
\end{align*}

Now suppose that for some $k\in\mathbb{N}$,
$$ \zeta^{(k)}\left(\frac{1}{2}+it\right) \ll_{k,\epsilon} |t|^\epsilon $$
holds for any $\epsilon>0$.
Then $ \zeta^{(k)}(\sigma+it) \ll_{k,\epsilon} |t|^{\mu(\sigma)+\epsilon} $ for $\sigma\geq1/2$.
Note that
$$
\left| \zeta^{(k-1)}\left(\frac{1}{2}+it\right) \right|
\leq \left| \zeta^{(k-1)}(3+it) \right| + \int_{1/2+it}^{3+it} \left| \zeta^{(k)}(z) \right| |dz|
\ll_{k,\epsilon} |t|^\epsilon.
$$
This implies
$$ \left| \zeta\left(\frac{1}{2}+it\right) \right| \ll_\epsilon |t|^\epsilon. $$
\end{proof}

We can then reformulate the Lindel\"{o}f hypothesis in terms of ergodic value distribution of $\zeta^{(k)}(s)$ on vertical lines under affine Boolean transformations as follows:
% % %
\begin{theorem} \label{lindeloef2}
Let $k$ be a non-negative integer.
The Lindel\"{o}f hypothesis is true if and only if, there exist $\alpha>0$, $\beta\in\mathbb{R}$ such that for any $l\in\mathbb{N}$,
\begin{equation} \label{eq:lind.erg}
\lim_{N \rightarrow\infty} \frac{1}{N} \sum_{n = 0}^{N - 1} \left| \zeta^{(k)}\left(1/2+iT_{\alpha,\beta}^nx\right) \right|^{2l}
%= \frac{\alpha}{\pi} \int_{\mathbb{R}} \frac{\left| \zeta^{(k)}(1/2+i\tau) \right|^{2l}}{\alpha^2 +(\tau-\beta)^2} d\tau
\end{equation}
exists for almost all $x \in \mathbb{R}$.
\end{theorem}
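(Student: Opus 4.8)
The plan is to read the ergodic average in \eqref{eq:lind.erg} as a Birkhoff average for the non-negative observable $g_l(\tau):=|\zeta^{(k)}(1/2+i\tau)|^{2l}$ under $T_{\alpha,\beta}$, whose invariant probability measure is the Cauchy measure $d\mu_{\alpha,\beta}(\tau)=\frac{\alpha}{\pi(\alpha^2+(\tau-\beta)^2)}\,d\tau$ established in Section \ref{sec_aff}. Since $\zeta^{(k)}$ has its only pole at $s=1$ and hence none on $\mathbb{L}_{1/2}$, each $g_l$ is a finite, continuous, non-negative function on $\mathbb{R}$. By Birkhoff's ergodic theorem the averages $\frac1N\sum_{n=0}^{N-1}g_l(T_{\alpha,\beta}^nx)$ converge to a finite limit for almost all $x$ whenever $g_l\in L^1(\mu_{\alpha,\beta})$; conversely, if $g_l\notin L^1(\mu_{\alpha,\beta})$ then, applying Birkhoff to the truncations $\min(g_l,M)$ and letting $M\to\infty$, the averages tend to $+\infty$ for almost all $x$. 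Thus for each $l$ the limit exists for almost all $x$ if and only if
$$
\int_{\mathbb{R}}\frac{|\zeta^{(k)}(1/2+i\tau)|^{2l}}{\alpha^2+(\tau-\beta)^2}\,d\tau<\infty .
$$
This reduces the theorem to showing that the Lindel\"of hypothesis is equivalent to the finiteness of all of these weighted integrals, for some (equivalently, for every) pair $\alpha,\beta$.

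For the direct implication I would assume the Lindel\"of hypothesis, so that by Theorem \ref{lindeloef1} we have $\zeta^{(k)}(1/2+it)\ll_{k,\epsilon}|t|^\epsilon$ and hence $g_l(\tau)\ll|\tau|^{2l\epsilon}$. Choosing $\epsilon=1/(4l)$ makes the integrand $\ll|\tau|^{1/2}(\alpha^2+(\tau-\beta)^2)^{-1}$, which is integrable over $\mathbb{R}$. Therefore $g_l\in L^1(\mu_{\alpha,\beta})$ for every $l$ and every $\alpha,\beta$, and the limits exist.

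For the converse, suppose that for some $\alpha,\beta$ the weighted integrals are finite for all $l$. Since $(\alpha^2+(\tau-\beta)^2)^{-1}\asymp\tau^{-2}$ for large $|\tau|$, this gives $\int_1^\infty|\zeta^{(k)}(1/2+it)|^{2l}\,t^{-2}\,dt<\infty$ for every $l$. Fixing $l$ and an auxiliary $l'>l$, I would apply H\"older's inequality with exponents $l'/l$ and $l'/(l'-l)$ to the factorization $|\zeta^{(k)}|^{2l}=\bigl(|\zeta^{(k)}|^{2l'}t^{-2}\bigr)^{l/l'}\,(t^{2})^{l/l'}$, which yields
$$
\int_1^T|\zeta^{(k)}(1/2+it)|^{2l}\,dt\ll_{l'}T^{\,1+l/l'} .
$$
Letting $l'\to\infty$ then produces the power-moment bound $\int_1^T|\zeta^{(k)}(1/2+it)|^{2l}\,dt\ll_{l,\epsilon}T^{1+\epsilon}$ for every $l$; note that it is exactly the availability of the hypothesis for all $l$ that makes this upgrade possible.

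Finally I would pass from these moment bounds back to the Lindel\"of hypothesis, and this is the step I expect to be the main obstacle. It is the content of the classical power-moment characterization of the Lindel\"of hypothesis (cf. \cite{tit2}), which must here be run for $\zeta^{(k)}$ rather than $\zeta$. The mechanism is that a large value $|\zeta^{(k)}(1/2+it_0)|$ propagates, through the polynomial growth of $\zeta^{(k+1)}$, to a comparably large value of $|\zeta^{(k)}|$ on a short interval about $t_0$; integrating the $2l$-th power over that interval and comparing with the global bound $\ll_{l,\epsilon}t_0^{1+\epsilon}$ forces $|\zeta^{(k)}(1/2+it_0)|\ll t_0^{(1+\epsilon)/(2l)}$, and letting $l\to\infty$ gives $\zeta^{(k)}(1/2+it)\ll_\epsilon|t|^\epsilon$. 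By Theorem \ref{lindeloef1} this last bound is equivalent to the Lindel\"of hypothesis, which closes the equivalence.
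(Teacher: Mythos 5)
Your proposal is correct and follows essentially the same route as the paper, which simply defers to the proof of Steuding's Theorem 4.1 with $\zeta$ replaced by $\zeta^{(k)}$: the Birkhoff reduction to $L^1(\mu_{\alpha,\beta})$-integrability, the H\"older upgrade of the weighted integrals to the moment bounds $\int_1^T|\zeta^{(k)}(1/2+it)|^{2l}\,dt\ll T^{1+\epsilon}$, and the classical large-value propagation argument using the unconditional bound $|\zeta^{(k+1)}(1/2+it)|\ll_k|t|$ are exactly the ingredients the paper supplies (the last two being the only points it makes explicit). The step you flag as the main obstacle is indeed the Titchmarsh-type power-moment characterization, and your described mechanism for it matches the one the paper relies on.
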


\begin{proof}[Proof of Theorem \ref{lindeloef2}]
From Theorem \ref{lindeloef1}, we can restate the Lindel\"{o}f hypothesis as
\begin{equation} \label{eq:lind.k}
\zeta^{(k)}\left(\frac{1}{2}+it\right) \ll_{k,\epsilon} |t|^\epsilon \quad \text{ as } |t|\rightarrow\infty
\end{equation}
for any non-negative integer $k$.
We then show that the hypothesis in the form \eqref{eq:lind.k} is equivalent to the existence of the limit in \eqref{eq:lind.erg}.

Replacing the function $\zeta(s)$ by $\zeta^{(k)}(s)$ in the proof of Theorem 4.1 in \cite{ste}, we can easily show the necessary condition for the Lindel\"{o}f hypothesis (in the form \eqref{eq:lind.k}).

To show the sufficient condition for the Lindel\"{o}f hypothesis, we note that
%$$
%\zeta^{(k)}(s) = (-1)^k s\int_1^\infty \frac{[x]-x+1/2}{x^{s+1}}(\log{x})^k dx
%+ (-1)^{k-1} k\int_1^\infty \frac{[x]-x+1/2}{x^{s+1}}(\log{x})^{k-1} dx
%+ \frac{(-1)^k k!}{(s-1)^{k+1}}
%$$
$$
\zeta^{(k)}(s)
= (-1)^{k-1} \int_1^\infty \frac{[x]-x+1/2}{x^{s+1}}(\log{x})^{k-1} \left( -s\log{x} + k \right) dx
+ \frac{(-1)^k k!}{(s-1)^{k+1}}
$$
so that $|\zeta^{(k+1)}(1/2+it)| < C_k |t|$ holds for any $|t|\geq1$ for some $C_k>0$ which may depend only on $k$.
Further, for $\tau\geq1$,
$$
\frac{1}{\alpha^2 +(\tau-\beta)^2}
= \frac{1}{\tau^2 (1+ (\alpha/\tau)^2 + 2|\beta|/\tau + (\beta/\tau)^2)}
\geq C_{\alpha,\beta} \frac{1}{\tau^2} \geq C_{\alpha,\beta} \frac{1}{1+ \tau^2}
$$
for some $C_{\alpha,\beta}>0$ that depends only on $\alpha$ and $\beta$.
Then again we can replace the function $\zeta(s)$ by $\zeta^{(k)}(s)$ in the proof of Theorem 4.1 in \cite{ste} to obtain the sufficient condition for the Lindel\"{o}f hypothesis (in the form \eqref{eq:lind.k}).
This completes our proof of Theorem \ref{lindeloef2}.
\end{proof}

% %2% %
\begin{example}[Dirichlet $L$-functions] \label{birk2}
Let $L(s, \chi)$ be the {\it Dirichlet $L$-function} associated with Dirichlet character $\chi$.
\begin{enumerate}
\item[(i)] If $\chi$ is non-principal, for any $s \in \mathbb{H}_{- 1 / 2}$, we have
\begin{align*}
\lim_{N \rightarrow\infty} \frac{1}{N} \sum_{n = 0}^{N - 1} L^{(k)}\left(s+iT_{\alpha,\beta}^nx,\chi\right)
&= \frac{\alpha}{\pi} \int_{\mathbb{R}} \frac{L^{(k)}(s+i\tau,\chi)}{\alpha^2 +(\tau-\beta)^2} d\tau \\
&= L^{(k)}(s+\alpha+i\beta,\chi)
\end{align*}
for almost all $x \in \mathbb{R}$.
\item[(ii)] If $\chi=\chi_0$ is principal, for any $s \in \mathbb{H}_{- 1 / 2} \backslash \mathbb{L}_{1}$, we have
$$
\lim_{N \rightarrow\infty} \frac{1}{N} \sum_{n = 0}^{N - 1} L^{(k)}\left(s+iT_{\alpha,\beta}^nx,\chi_0\right)
= \frac{\alpha}{\pi} \int_{\mathbb{R}} \frac{L^{(k)}(s+i\tau,\chi_0)}{\alpha^2 +(\tau-\beta)^2} d\tau
$$
for almost all $x \in \mathbb{R}$.
Denoting the right-hand side of the above formula by $l_{\alpha,\beta}^{(k)}(s,\chi_0)$, we have
\begin{align*}
\hskip15mm
l_{\alpha,\beta}^{(k)}(s,\chi_0) = 
\begin{cases}
L^{(k)}(s+\alpha+i\beta,\chi_0) + \gamma_{-1}(\chi_0) A_k(s), \\
\quad\quad\quad\quad\quad\quad\quad\quad\quad\quad
-1/2<\operatorname{Re}(s)<1, s\neq 1-\alpha-i\beta; \\
\displaystyle \gamma_k(\chi_0) - \frac{k!\gamma_{-1}(\chi_0)}{(2\alpha)^{k+1}}, \\
\quad\quad\quad\quad\quad\quad\quad\quad\quad\quad
-1/2<\operatorname{Re}(s)<1, s = 1-\alpha-i\beta; \\
L^{(k)}(s+\alpha+i\beta,\chi_0), \\
\quad\quad\quad\quad\quad\quad\quad\quad\quad\quad
\operatorname{Re}(s)>1;
\end{cases}
\end{align*}
where $\gamma_{-1}(\chi_0)$, $\gamma_k(\chi_0)$'s are constants that depend only on $\chi_0$.
They are coefficients of the Laurent expansion of $L^{(k)}(s,\chi_0)$ near $s=1$.
If $k=0$, we can also show the result on $\mathbb{L}_1$ by setting
$$
l_{\alpha,\beta}^{(0)}(1+it,\chi_0) = L^{(0)}(1+\alpha+i(t+\beta),\chi_0) - \frac{\alpha \gamma_{-1}(\chi_0)}{\alpha^2 + (t+\beta)^2}.
$$
\end{enumerate}
\end{example}

\begin{proof}[Proof of Example \ref{birk2}]
As in the proof of Example \ref{birk1}, for any non-negative integer $k$, $L^{(k)}(s,\chi)$ has an absolute convergent Dirichlet series expression when $\operatorname{Re}(s)>1$.
Referring to \cite[Lemma 2]{red}, we know that $L^{(k)}(s,\chi)$ also satisfies an inequality similar to \eqref{eq:t-bound}.

If $\chi$ is non-principal, $L^{(k)}(s,\chi)$ is entire for all $k\geq0$.
Thus $L^{(k)}(s,\chi)$ satisfies \eqref{eq:1} of Theorem \ref{birk} for all $s \in \mathbb{H}_{-1/2}$.

Otherwise (that is, when $\chi=\chi_0$), $L^{(k)}(s,\chi_0)$ ($k\geq1$) has a pole of order $k+1$ at $s = 1$.
Hence we can also apply Theorem \ref{birk} with $c=-1/2$, $s_0=1$, and $m=k+1$ to $L^{(k)}(s,\chi_0)$ with the Laurent coefficients as discussed in \cite[Theorem 2]{kan}.
\end{proof}
% %2% %

% %3% %
\begin{example}[Dedekind $\zeta$-functions] \label{birk3}
Let $\zeta_\mathbb{K}(s)$ be the {\it Dedekind $\zeta$-function} of a number field $\mathbb{K}$ over $\mathbb{Q}$ of degree $d_\mathbb{K}$.
Then for any $k \geq 0$ and $s \in \mathbb{H}_{1/2 - 1/d_\mathbb{K}} \backslash \mathbb{L}_{1}$, we have
$$
\lim_{N \rightarrow\infty} \frac{1}{N} \sum_{n = 0}^{N - 1} \zeta_\mathbb{K}^{(k)}\left(s+iT_{\alpha,\beta}^nx\right)
= \frac{\alpha}{\pi} \int_{\mathbb{R}} \frac{\zeta_\mathbb{K}^{(k)}(s+i\tau)}{\alpha^2 +(\tau-\beta)^2} d\tau
$$
for almost all $x \in \mathbb{R}$.

Denoting the right-hand side of the above formula by ${l_\mathbb{K}}_{\alpha,\beta}^{(k)}(s)$, we have
\begin{align*}
{l_\mathbb{K}}_{\alpha,\beta}^{(k)}(s) = 
\begin{cases}
\zeta_\mathbb{K}^{(k)}(s+\alpha+i\beta) + \gamma_{-1}(\mathbb{K}) A_k(s), \\
\quad\quad\quad\quad\quad\quad\quad\quad\quad\quad
1/2 - 1/d_\mathbb{K} < \operatorname{Re}(s) < 1, s \neq 1-\alpha-i\beta; \\
\displaystyle k!\gamma_k(\mathbb{K}) - \frac{k!\gamma_{-1}(\mathbb{K})}{(2\alpha)^{k+1}}, \\
\quad\quad\quad\quad\quad\quad\quad\quad\quad\quad
1/2 - 1/d_\mathbb{K} < \operatorname{Re}(s) < 1, s = 1-\alpha-i\beta; \\
\zeta_\mathbb{K}^{(k)}(s+\alpha+i\beta), \\
\quad\quad\quad\quad\quad\quad\quad\quad\quad\quad
\operatorname{Re}(s)>1;
\end{cases}
\end{align*}
where $\gamma_{-1}(\mathbb{K})$, $\gamma_k(\mathbb{K})$'s are constants that depend only on $\mathbb{K}$.
They are coefficients of the Laurent expansion of $\zeta_\mathbb{K}^{(k)}(s)$ near $s=1$.
If $k=0$, we can also show the result on $\mathbb{L}_1$ by setting
$$
{l_\mathbb{K}}_{\alpha,\beta}^{(0)}(1+it) = \zeta_\mathbb{K}^{(0)}(1+\alpha+i(t+\beta)) - \frac{\alpha\gamma_{-1}(\mathbb{K})}{\alpha^2 + (t+\beta)^2}.
$$
\end{example}

\begin{proof}[Proof of Example \ref{birk3}]
We refer to \cite[Theorem 2]{ste1} for the bound of the form \eqref{eq:t-bound} and to \cite[pp. 496--497]{has} for the Laurent coefficients of $\zeta_\mathbb{K}^{(k)}(s)$ near its pole at $s=1$.
The rest of the proof proceeds as in the proof of Example \ref{birk1} with $c=1/2-1/d_\mathbb{K}$, $s_0=1$, and $m=k+1$.
\end{proof}
% %3% %

% % %
\begin{remark}
We can also show results analogous to Theorems \ref{lindeloef1} and \ref{lindeloef2} for Dirichlet $L$-functions associated with primitive Dirichlet characters and Dedekind zeta functions, if we formulate the \emph{extended Lindel\"{o}f hypothesis} as:
$$
\text{For any } \epsilon>0,\,\,
f\left(\frac{1}{2}+it\right) \ll_{f,\epsilon} |t|^\epsilon \quad \text{ as } |t|\rightarrow\infty
$$
for these functions ($f$ is any of these $\zeta$-functions and $L$-functions).
We do not discuss this further but we remark that we can show these analogous results by using methods similar to the methods used in proving Theorems \ref{lindeloef1} and \ref{lindeloef2}.
\end{remark}

% %4% %
\begin{example}[Hurwitz zeta functions] \label{birk4}
For non-negative integer $k$, $0<\mathfrak{a}\leq1$, and any $s$ satisfying $\operatorname{Re}(s)>-1/2$ and $\operatorname{Re}(s)\neq1$, we have
$$
\lim_{N \rightarrow\infty} \frac{1}{N} \sum_{n = 0}^{N - 1} \zeta^{(k)}(s+iT_{\alpha,\beta}^nx,\mathfrak{a})
= \frac{\alpha}{\pi} \int_{\mathbb{R}} \frac{\zeta^{(k)}(s+i\tau,\mathfrak{a})}{\alpha^2 +(\tau-\beta)^2} d\tau
$$
for almost all $x$ in $\mathbb{R}$.

Denoting the right-hand side of the above formula by $l_{\alpha,\beta}^{(k)}(s,\mathfrak{a})$, we have
\begin{align*}
l_{\alpha,\beta}^{(k)}(s,\mathfrak{a}) = 
\begin{cases}
\zeta^{(k)}(s+\alpha+i\beta,\mathfrak{a}) + A_k(s), \\
\quad\quad\quad\quad\quad\quad\quad\quad\quad\quad
-1/2 < \operatorname{Re}(s) < 1, s \neq 1-\alpha-i\beta; \\
\displaystyle k!\gamma_k(\mathfrak{a}) - \frac{k!}{(2\alpha)^{k+1}}, \\
\quad\quad\quad\quad\quad\quad\quad\quad\quad\quad
-1/2 < \operatorname{Re}(s) < 1, s = 1-\alpha-i\beta; \\
\zeta^{(k)}(s+\alpha+i\beta,\mathfrak{a}), \\
\quad\quad\quad\quad\quad\quad\quad\quad\quad\quad
\operatorname{Re}(s)>1;
\end{cases}
\end{align*}
where
$$
\gamma_k(\mathfrak{a}) := \frac{(-1)^k}{k!} \lim_{N\rightarrow\infty} \left( \sum_{n=0}^N \frac{\log^k{(n+\mathfrak{a})}}{n+\mathfrak{a}} - \frac{\log^{k+1}{(N+\mathfrak{a})}}{k+1} \right)
$$
is a coefficient of the Laurent expansion of $\zeta_\mathbb{K}^{(k)}(s)$ near $s=1$.
If $k=0$, we can also show the result on $\mathbb{L}_1$ by setting
$$
l_{\alpha,\beta}^{(0)}(1+it,\mathfrak{a}) = \zeta^{(0)}(1+\alpha+i(t+\beta),\mathfrak{a}) - \frac{\alpha}{\alpha^2 + (t+\beta)^2}.
$$
\end{example}

\begin{proof}[Proof of Example \ref{birk4}]
The proof also follows that of Example \ref{birk1} where we put $c=-1/2$, $s_0=1$, and $m=k+1$.
Here, we refer to \cite[Lemma 2]{red} for the bound of the form \eqref{eq:t-bound} and to \cite[Theorem 1]{ber} for the Laurent coefficients of $\zeta^{(k)}(s,\mathfrak{a})$ near its pole at $s=1$.
\end{proof}
% %4% %

%%%%%%%%%%%%%%%%LEE%%%%%%%%%%%%%%%%%%%%%%
%%%%%%%%%%%%%%%%LEE%%%%%%%%%%%%%%%%%%%%%%

%%%%%%%%%%%%%%
%%%%%%%%%%%%%%
\section{Affine Boolean transformations} \label{sec_aff}
%%%%%%%%%%%%%%
%%%%%%%%%%%%%%

In this section, we will show the ergodicity of $T_{\alpha, \beta}$ defined in \eqref{eq:affboot} with respect to a proper measure.
To state our main theorem, let us recall some basic notation.
We denote by $\mathcal{B}$ and $\nu$ the {\it Borel $\sigma$-algebra} on $\mathbb{R}$ and the {\it Lebesgue measure} on $\mathcal{B}$.
For a given $\alpha > 0, \beta \in \mathbb{R}$, let us define the function $\mu_{\alpha, \beta}$ by
$$
\mu_{\alpha, \beta}(A) := {\alpha \over \pi} \int_{A} {d \tau \over \alpha^2 + (\tau - \beta)^2 }
$$
for any $A \in \mathcal{B}$.
One can easily check that $\mu_{\alpha, \beta}$ is a probability on $\mathcal{B}$ and
\begin{equation}\label{eq: zero}
\mu_{\alpha, \beta}(A) 
= {\alpha \over \pi} \int_{A} {d \tau \over \alpha^2 + (\tau - \beta)^2 } 
\leq \int_{A} { d \tau \over \alpha \pi } 
= {1 \over \alpha \pi} \nu(A)
\end{equation}
for any $A \in \mathcal{B}$.
In particular, this implies that $\mu_{\alpha, \beta}(A) = 0$ if $\nu(A) = 0$.

%%%%
\begin{theorem} \label{001}
For given $\alpha > 0 , \beta \in \mathbb{R}$, $T_{\alpha, \beta}: \mathbb{R} \rightarrow \mathbb{R}$ is measure preserving with respect to $\mu_{\alpha, \beta}$, that is, for any $A \in \mathcal{B}$, we have 
$$
\mu_{\alpha, \beta}(T_{\alpha, \beta}^{-1}(A)) = \mu_{\alpha, \beta}(A).
$$
Moreover, it is ergodic, that is, if $T_{\alpha, \beta}^{-1}(A) = A$, then either $\mu_{\alpha, \beta}(A)$ or $\mu_{\alpha, \beta}(X \backslash A)$ is $0$.
\end{theorem}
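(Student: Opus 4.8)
The plan is to realize $T_{\alpha,\beta}$ as the boundary restriction of a holomorphic self-map of the upper half-plane $\mathbb{U} := \{z \in \mathbb{C} : \operatorname{Im}(z) > 0\}$ and to exploit that $\mu_{\alpha,\beta}$ is precisely the harmonic measure at an interior fixed point. Rewriting $T_{\alpha,\beta}(x) = \frac{x+\beta}{2} - \frac{\alpha^2}{2(x-\beta)}$, I would extend the same formula to $z \in \mathbb{U}$, obtaining $F(z) = \frac{z+\beta}{2} - \frac{\alpha^2}{2(z-\beta)}$. A direct computation of $\operatorname{Im} F(z)$ shows $F(\mathbb{U}) \subseteq \mathbb{U}$, that $F$ restricts to $T_{\alpha,\beta}$ on $\mathbb{R}$, and that $F$ fixes $z_0 := \beta + i\alpha$; moreover the density of $\mu_{\alpha,\beta}$ is exactly the Poisson kernel of $\mathbb{U}$ at $z_0$. (Equivalently, one may first conjugate by the affine map $\phi(x) = \alpha x + \beta$, which carries the standard Boolean transformation $T_{1,0}$ and the standard Cauchy measure to $T_{\alpha,\beta}$ and $\mu_{\alpha,\beta}$, reducing everything to $\alpha=1,\beta=0$.)

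For measure preservation I would argue via the transfer operator, i.e.\ a branchwise change of variables. Since $T_{\alpha,\beta}'(x) = \frac12\bigl(1 + \alpha^2/(x-\beta)^2\bigr) > 0$, the map is strictly increasing on each of $(-\infty,\beta)$ and $(\beta,\infty)$ and sends each branch bijectively onto $\mathbb{R}$; hence every $y$ has exactly two preimages $x_\pm = y \pm \sqrt{(y-\beta)^2 + \alpha^2}$. Writing $p$ for the density of $\mu_{\alpha,\beta}$ and substituting $y = T_{\alpha,\beta}(x)$ on each branch reduces the claim $\int g\circ T_{\alpha,\beta}\,d\mu_{\alpha,\beta} = \int g\,d\mu_{\alpha,\beta}$ to the pointwise identity $p(x_+)/T_{\alpha,\beta}'(x_+) + p(x_-)/T_{\alpha,\beta}'(x_-) = p(y)$. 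With $v = y-\beta$ and $w = \sqrt{v^2+\alpha^2}$ one checks $\alpha^2 + (x_\pm - \beta)^2 = 2w(w\pm v)$, whence each of the two terms collapses to $\alpha/(2\pi w^2)$ and their sum is $\alpha/(\pi w^2) = p(y)$. This is a short, self-contained computation.

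Ergodicity is the substantive part. Let $A \in \mathcal{B}$ satisfy $T_{\alpha,\beta}^{-1}(A) = A$ and let $u$ be the bounded harmonic extension of $\mathbf{1}_A$ to $\mathbb{U}$, given by its Poisson integral, so that $u(z_0) = \mu_{\alpha,\beta}(A)$ and $u$ has nontangential boundary values $\mathbf{1}_A$ almost everywhere. Because $F$ is an inner function, in the sense that it maps $\mathbb{U}$ into $\mathbb{U}$ and its boundary values $T_{\alpha,\beta}$ are real a.e., the composition $u\circ F$ is again bounded and harmonic, and its nontangential boundary values equal $\mathbf{1}_A\circ T_{\alpha,\beta} = \mathbf{1}_{T_{\alpha,\beta}^{-1}(A)} = \mathbf{1}_A$ a.e. By uniqueness of bounded harmonic functions with prescribed boundary data, $u\circ F = u$, hence $u(z) = u(F^n(z))$ for all $n$. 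Since $F$ is a rational map of degree two, it is not an automorphism of $\mathbb{U}$, so by the Denjoy--Wolff theorem its interior fixed point $z_0$ attracts every orbit, $F^n(z) \to z_0$. Continuity then forces $u \equiv u(z_0)$, so the boundary function $\mathbf{1}_A$ is a.e.\ constant and $\mu_{\alpha,\beta}(A) \in \{0,1\}$, which is the assertion.

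The main obstacle will be the harmonic-analytic step in the ergodicity argument, namely justifying that $u\circ F$ genuinely has boundary values $\mathbf{1}_A\circ T_{\alpha,\beta}$ almost everywhere; this is where the inner-function property of $F$ is essential and where care with nontangential limits (via Lindel\"{o}f's theorem, together with the fact that $F$ extends continuously to $\mathbb{R}\setminus\{\beta\}$) is required. If one prefers to sidestep this, the alternative is to transport everything to the unit disk by the Cayley transform sending $z_0 \mapsto 0$: there $F$ becomes a finite Blaschke product of degree two fixing the origin and $\mu_{\alpha,\beta}$ becomes normalized arc-length, so one may invoke the known ergodicity (indeed exactness) of finite Blaschke products of degree at least two.
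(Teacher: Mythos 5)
Your proof is correct, but it reaches the conclusion by a genuinely different route from the paper, above all for ergodicity. The paper argues in two reductions: it first treats $T_{1,0}$, quoting the Adler--Weiss theorem \cite{AW73} that the Boolean transformation preserves the (infinite) Lebesgue measure $\nu$ and is ergodic for it, and then transfers ergodicity to the Cauchy measure $\mu_{1,0}$ via the absolute continuity $\mu_{1,0}\leq(\alpha\pi)^{-1}\nu$; the general $(\alpha,\beta)$ case follows from the affine conjugation $T_{\alpha,\beta}=\phi_{\alpha,\beta}\circ T_{1,0}\circ\phi_{\alpha,\beta}^{-1}$, $\mu_{\alpha,\beta}=\mu_{1,0}\circ\phi_{\alpha,\beta}^{-1}$ --- exactly the conjugation you mention parenthetically. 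Your invariance step is the explicit transfer-operator form of the change of variables the paper performs with the indicator function, and the computation checks out: with $v=y-\beta$, $w=\sqrt{v^2+\alpha^2}$ one indeed has $\alpha^2+(x_\pm-\beta)^2=2w(w\pm v)$, each summand equals $\alpha/(2\pi w^2)$, and the sum is $p(y)$. For ergodicity you bypass Adler--Weiss entirely: $F(z)=\frac{z+\beta}{2}-\frac{\alpha^2}{2(z-\beta)}$ is a holomorphic self-map of the upper half-plane fixing $z_0=\beta+i\alpha$ (in fact $F'(z_0)=0$, so the fixed point is superattracting), $\mu_{\alpha,\beta}$ is the harmonic measure at $z_0$, and the classical inner-function argument (Fatou, uniqueness of the Poisson representation of bounded harmonic functions, Denjoy--Wolff) gives $u\equiv u(z_0)$. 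This buys a self-contained and conceptually transparent proof that works directly with the finite invariant measure (and in fact yields exactness, which is stronger than ergodicity); the cost is the boundary-value step you rightly flag, which here is mild because $F$ is analytic across $\mathbb{R}\setminus\{\beta\}$ with positive real derivative, hence carries Stolz angles at $x$ into Stolz angles at $T_{\alpha,\beta}(x)$, and the Fatou exceptional null set pulls back to a null set since each branch of $T_{\alpha,\beta}$ is a diffeomorphism. The paper's route is shorter on the page but outsources the substantive ergodicity statement to the literature; yours proves it from scratch with standard complex analysis.
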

%%%%

Applying Birkhoff's ergodic theorem, we have an ergodic mean-value of an integrable function.
Let us denote by $T_{\alpha, \beta}^n$ the {\it $n$-th iteration} of $T_{\alpha, \beta}$, that is, 
$$
T_{\alpha, \beta}^n := \underbrace{T_{\alpha, \beta} \circ T_{\alpha, \beta} \circ \cdots \circ T_{\alpha, \beta}}_{n \text{ times}}.
$$

\begin{corollary} \label{002}
If $f : \mathbb{R} \rightarrow \mathbb{R}$ is integrable with respect to $\mu_{\alpha, \beta}$, then
\begin{equation}\label{eq: birk}
\lim_{N \rightarrow \infty} { 1 \over N } \sum_{n = 0}^{N - 1} f \circ T_{\alpha, \beta}^{n} x = {\alpha \over \pi} \int_\mathbb{R} { f(\tau) d \tau \over \alpha^2 + (\tau - \beta)^2 }
\end{equation}
for almost all $x \in \mathbb{R}$.
\end{corollary}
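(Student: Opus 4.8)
The plan is to read Corollary \ref{002} as an immediate consequence of Theorem \ref{001} combined with Birkhoff's pointwise ergodic theorem, leaving only one measure-theoretic subtlety to settle at the end: the precise meaning of ``almost all $x$''.

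First I would invoke Theorem \ref{001}, which asserts that $T_{\alpha,\beta}$ is a measure-preserving and ergodic transformation of the probability space $(\mathbb{R},\mathcal{B},\mu_{\alpha,\beta})$. Since $f$ is assumed integrable with respect to $\mu_{\alpha,\beta}$, that is $f\in L^1(\mu_{\alpha,\beta})$, Birkhoff's pointwise ergodic theorem applies directly: for $\mu_{\alpha,\beta}$-almost all $x$, the time average $\frac{1}{N}\sum_{n=0}^{N-1} f(T_{\alpha,\beta}^n x)$ converges, and by ergodicity its limit is the $\mu_{\alpha,\beta}$-almost everywhere constant value $\int_{\mathbb{R}} f\, d\mu_{\alpha,\beta}$. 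Unwinding the definition of $\mu_{\alpha,\beta}$, this space average is exactly $\frac{\alpha}{\pi}\int_{\mathbb{R}} \frac{f(\tau)}{\alpha^2+(\tau-\beta)^2}\, d\tau$, which is the right-hand side of \eqref{eq: birk}.

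The point that requires care is that Birkhoff's theorem furnishes the conclusion off a $\mu_{\alpha,\beta}$-null set, whereas the corollary claims convergence for Lebesgue-almost all $x$. To bridge this I would observe that the two measures are equivalent: the Radon--Nikodym density of $\mu_{\alpha,\beta}$ with respect to Lebesgue measure $\nu$ is $\frac{\alpha}{\pi(\alpha^2+(\tau-\beta)^2)}$, which is continuous and strictly positive on all of $\mathbb{R}$. Equation \eqref{eq: zero} already records that $\nu(A)=0$ forces $\mu_{\alpha,\beta}(A)=0$; conversely, strict positivity of the density makes $\mu_{\alpha,\beta}(A)=0$ imply $\nu(A)=0$. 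Hence every $\mu_{\alpha,\beta}$-null set is Lebesgue-null, so the exceptional set produced by Birkhoff's theorem is also $\nu$-negligible. This upgrades ``$\mu_{\alpha,\beta}$-almost all $x$'' to ``Lebesgue-almost all $x$'' and finishes the argument.

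I do not expect any serious obstacle, since the substantive content (measure preservation and ergodicity) is already discharged in Theorem \ref{001}. The only thing one must not overlook is the transition between the two notions of ``almost everywhere'', and that is precisely the place where the strict positivity of the Cauchy density enters.
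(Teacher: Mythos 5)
Your proposal is correct and follows exactly the paper's route: the paper simply states that the corollary ``follows immediately from Birkhoff's ergodic theorem and Theorem \ref{001}.'' Your additional observation that one must pass from ``$\mu_{\alpha,\beta}$-almost all'' to ``Lebesgue-almost all'' via the strict positivity of the Cauchy density is a point the paper leaves implicit (it only records the converse implication in \eqref{eq: zero}), and you handle it correctly.
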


See \cite[Theorem $2.30$]{EW11} for the proof of Birkhoff's ergodic theorem. 
Corollary \ref{002} follows immediately from Birkhoff's ergodic theorem and Theorem \ref{001}.

Birkhoff's ergodic theorem describes the relation between the space average of a function and the time average along the orbit.
In the next section, we will apply Corollary \ref{002} to transform a mean-value of ergodic type into a computable integral.

In the rest of this section, we complete the proof of Theorem \ref{001}.
We first recall the famous result given by R. Adler and B. Weiss.

\begin{lemma} \label{003}
The Boolean transformation $T_{1, 0}$ is measure preserving with respect to $\nu$.
Moreover, it is ergodic.
\end{lemma}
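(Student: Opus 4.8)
The statement to prove is Lemma~\ref{003}: that the classical Boolean transformation $T_{1,0}(x) = \tfrac{1}{2}(x - 1/x)$ is measure preserving and ergodic with respect to Lebesgue measure $\nu$. Since the authors attribute this to Adler and Weiss, the cleanest route is to \emph{cite} the original result rather than reprove it from scratch; but if a self-contained argument is wanted, here is how I would proceed.

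\textbf{Measure preservation.} The plan is to verify the standard change-of-variables identity for pushforward measures. For a test set, or equivalently for an integrable $g$, I would compute $\int_{\mathbb{R}} g(T_{1,0}x)\,d\nu(x)$ by partitioning $\mathbb{R}$ into the two branches where $T_{1,0}$ is monotone and solving $T_{1,0}(x) = y$, i.e. $x - 1/x = 2y$, which gives the quadratic $x^2 - 2yx - 1 = 0$ with the two real roots $x_\pm = y \pm \sqrt{y^2+1}$. The key computation is that the sum of the reciprocal derivatives over the preimages equals $1$: from $T_{1,0}'(x) = \tfrac{1}{2}(1 + 1/x^2)$ one finds $\sum_{x:\,T_{1,0}x=y} 1/|T_{1,0}'(x)| = 1$ for every $y$, which is exactly the condition $\frac{d}{dy}\nu(T_{1,0}^{-1}(-\infty,y]) = 1$ guaranteeing $\nu(T_{1,0}^{-1}A) = \nu(A)$. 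This is a routine but pleasant identity, and it is the technical heart of the measure-preservation half.

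\textbf{Ergodicity.} This is the genuinely hard part and the main obstacle. The standard Adler--Weiss argument realizes $T_{1,0}$ as a factor of a known ergodic system. The natural approach is the conjugacy with the Gauss-type/continued-fraction or, more directly, the \emph{boundary map} viewpoint: the Boolean transformation is conjugate (via the Cayley transform $x \mapsto \frac{x-i}{x+i}$ sending $\mathbb{R}$ to the unit circle, or via $x = \cot$) to a piecewise-expanding map, and one transfers ergodicity through the conjugacy. Alternatively, one invokes the fact that $T_{1,0}$ arises from the $\mathrm{PSL}_2(\mathbb{Z})$-action or from the inducing of the geodesic/boundary flow whose ergodicity is classical. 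I would set up the conjugating homeomorphism explicitly, check it intertwines $T_{1,0}$ with the reference map, confirm it carries $\nu$ to the (equivalent) invariant measure of that map, and then pull back ergodicity. The delicate point is justifying that the conjugacy is measure-theoretic (null sets to null sets) so that ergodicity actually transfers.

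\textbf{Recommended phrasing.} Given that the excerpt already labels this ``the famous result given by R. Adler and B. Weiss,'' I would in practice keep the proof minimal: state that measure preservation follows from the preimage/Jacobian identity above, and cite Adler--Weiss (and, for an accessible modern treatment, a text such as the one the authors use for Birkhoff's theorem) for ergodicity, rather than recapitulating the full conjugacy argument. The subsequent sections only \emph{use} Lemma~\ref{003} as a black box to bootstrap to $T_{\alpha,\beta}$ via a further conjugacy, so a short proof with a precise citation is entirely adequate and keeps the exposition focused on the authors' own contribution in Theorem~\ref{001}.
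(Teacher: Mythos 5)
Your recommended route---state the lemma and cite Adler--Weiss---is exactly what the paper does: its entire ``proof'' of Lemma~\ref{003} is the sentence ``See \cite{AW73} for the proof,'' so at the level of what is actually written the two treatments coincide.

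One concrete caution about your supplementary sketch, because the identity at its heart fails for the map as the paper defines it. From \eqref{eq:affboot}, $T_{1,0}(x)=\tfrac12(x-1/x)$, so the preimages of $y$ are $x_\pm=y\pm\sqrt{y^2+1}$ with $x_+x_-=-1$, and $1/|T_{1,0}'(x)|=2x^2/(x^2+1)$; summing over the two preimages gives
$\frac{2x_+^2}{x_+^2+1}+\frac{2x_-^2}{x_-^2+1}=2$, not $1$. The identity you want (sum equal to $1$) holds for the \emph{unnormalized} Boolean map $x\mapsto x-1/x$, which is the transformation Adler and Weiss actually study; the halved map $T_{1,0}$ doubles Lebesgue measure and instead preserves the Cauchy measure $\mu_{1,0}$ --- which is precisely what the paper verifies in the proof of Theorem~\ref{001} and what the rest of the argument uses. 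Ergodicity is unaffected by this normalization issue, since $\nu$ and $\mu_{1,0}$ are equivalent measures and ergodicity (invariant sets are null or co-null) transfers between equivalent measures. So if you do write out the Jacobian computation rather than citing it, carry it out for $x-1/x$ against $\nu$ (matching \cite{AW73}) or for $T_{1,0}$ against $\mu_{1,0}$, not for $T_{1,0}$ against Lebesgue measure as stated.
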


See \cite[Theorem and Main Theorem]{AW73} for the proof of Lemma \ref{003}.

%%%
\begin{proof}[Proof of Theorem \ref{001}]
We first check that $T: = T_{1, 0}$ is measure preserving and ergodic with respect to $\mu := \mu_{1, 0}$.
Let us denote by $\chi_A$ the {\it indicator function} of $A \subset \mathbb{R}$.
It follows from a simple calculation that
\begin{align*}
\mu(T^{-1}(A)) 
&= {1 \over \pi} \int_{\mathbb{R}} { \chi_A(T(\tau)) d \tau \over 1 + \tau^2 } \\
%&= {1 \over \pi} \left( \int_{-\infty}^{0} { \chi_A(T(\tau)) d \tau \over 1 + \tau^2 } + \int_{0}^{\infty} { \chi_A(T(\tau)) d \tau \over 1 + \tau^2 } \right) \\
%&= {1 \over \pi} \left( {1 \over 2 } \int_{\mathbb{R}} { \chi_A(T(\tau)) d T(\tau) \over 1 + T(\tau)^2 } + {1 \over 2 } \int_{\mathbb{R}} { \chi_A(T(\tau)) d T(\tau) \over 1 + T(\tau)^2 } \right) \\
&= {1 \over \pi} \int_{\mathbb{R}} { \chi_A(T(\tau)) d T(\tau) \over 1 + T(\tau)^2 }
= \mu(A)
\end{align*}
for any $A \in \mathcal{B}$.
%Note that each restriction $T | (- \infty, 0), (0, \infty) \rightarrow \mathbb{R}$ of $T$ to $(-\infty, 0)$ and $(0, \infty)$ is bijective.
%See also \cite[Lemma $2.6$ and Example $2.10$]{EW11} for more details.
Thus $T$ is measure preserving with respect to $\mu$.
If $T^{-1}(A) = A$, it follows from Lemma \ref{003} that either $\nu(A)$ or $\nu(X \backslash A)$ is $0$.
Hence, by \eqref{eq: zero}, either $\mu(A)$ or $\mu(X \backslash A)$ must be $0$.

Next, let us consider the general case.
Defining the affine transformation $\phi_{\alpha, \beta} : \mathbb{R} \rightarrow \mathbb{R}$ by $\phi_{\alpha, \beta} (x) := \alpha x + \beta$,
we can easily check that
$$%\begin{equation}\label{eq: eq1}
T_{\alpha, \beta} = \phi_{\alpha, \beta} \circ T \circ \phi^{-1}_{\alpha, \beta}
$$%\end{equation}
and
$$%\begin{equation}\label{eq: eq2}
\mu_{\alpha, \beta}(A) = \mu(\phi_{\alpha, \beta}^{-1}(A)).
$$%\end{equation}
Since $T$ is measure preserving with respect to $\mu$, we have
\begin{align*}
\mu_{\alpha, \beta}(T_{\alpha, \beta}^{-1}(A))
&= \mu(\phi_{\alpha, \beta}^{-1}(T^{-1}_{\alpha, \beta}(A)) \\
&= \mu(\phi_{\alpha, \beta}^{-1}(\phi_{\alpha, \beta} ( T^{-1} ( \phi^{-1}_{\alpha, \beta}(A)))) \\
&= \mu(T^{-1}(\phi_{\alpha, \beta}^{-1}(A) ) ) \\
&= \mu(\phi_{\alpha, \beta}^{-1}(A)) = \mu_{\alpha, \beta}(A).
\end{align*}
Moreover, if $T^{-1}_{\alpha, \beta}(A) = A$, we have
$$
T^{-1}(\phi_{\alpha, \beta}^{-1}(A) ) = \phi^{-1}_{\alpha, \beta} ( T_{\alpha, \beta}^{-1} (A) ) = \phi_{\alpha, \beta}^{-1}(A).
$$
Since $T$ is ergodic with respect to $\mu$,  either $\mu_{\alpha, \beta}(A) = \mu(\phi_{\alpha, \beta}^{-1}(A))$ or $\mu_{\alpha, \beta}(X \backslash A) = \mu(X \backslash \phi_{\alpha, \beta}^{-1}(A))$ is $0$.
\end{proof}

%%%%%%%%%%%%%%%%LEE%%%%%%%%%%%%%%%%%%%%%%
%%%%%%%%%%%%%%%%LEE%%%%%%%%%%%%%%%%%%%%%%

%%%%%%%%%%%%%%%%SUR%%%%%%%%%%%%%%%%%%%%%%
%%%%%%%%%%%%%%%%SUR%%%%%%%%%%%%%%%%%%%%%%

\section{Proof of the main theorem} \label{sec_proof}

\begin{proof}[Proof of Theorem \ref{birk}]
It follows from Corollary \ref{002} that \eqref{eq:birk} holds. 
For the case $m=1$, we set the values of the integrand to be the principal value on the line $\mathbb{L}_{\sigma_0}$ and since this is integrable, as we shall see below in Case 3 of the evaluation of $l_{\alpha,\beta}$, we can now apply Corollary \ref{002} for all $s \in \mathbb{H}_{c}$.
In the rest of this section, we evaluate $l_{\alpha,\beta}$ to complete the proof of Theorem \ref{birk}.

%
%\begin{enumerate}
%
%
%\item[(a)] 

Suppose that $f$ has no pole in $\mathbb{H}_{c}$.
The poles of the integrand in $l_{\alpha,\beta}$ in $\mathbb{H}_{c}$ are coming only from the zeros of $ {\alpha^2+(\tau-\beta)^2}.$
%We evaluate $l_{\alpha,\beta}$ as follows:
For any $s=\sigma+it \in \mathbb{H}_{c}$, we consider the counterclockwise oriented semicircle $\Gamma_R$ of a sufficiently large radius $R>|s|+\alpha+|\beta|$ centered at the origin. %as in Figure \ref{case0}.
%We let $R>|s|+|s_0|+\alpha+|\beta|$ and denote by $\Gamma_R$ the counterclockwise oriented semicircle $\{ R \exp(\theta i) \mid \pi \leq \theta \leq 2\pi \}$ (see Figure \ref{case0}).
%
Then applying Cauchy's integral theorem, we have
\begin{align*}
l_{\alpha,\beta}(s)
&= \frac{\alpha}{\pi} \int_{\mathbb{R}} \frac{f(s+i\tau)}{\alpha^2 +(\tau-\beta)^2} d\tau \\
&= \frac{\alpha}{\pi} \left( \lim_{R\rightarrow\infty} \int_{\Gamma_R} \frac{f(s+i\tau)}{\alpha^2+(\tau-\beta)^2} d\tau
- 2\pi i \operatorname{Res}_{\tau=\beta-i\alpha} \frac{f(s+i\tau)}{\alpha^2+(\tau-\beta)^2} \right).
\end{align*}
Note that we can find a $\sigma' \in (c,\sigma)$ sufficiently near $c$.
Setting $\epsilon = (\sigma'-c)/2$, we have
\begin{align*}
\int_{\Gamma_R} \frac{f(s+i\tau)}{\alpha^2+(\tau-\beta)^2} d\tau
&= \int_\pi^{2\pi} \frac{f(s+iRe^{i\theta})}{\alpha^2+(Re^{i\theta}-\beta)^2} iRe^{i\theta}d\theta \\
%&\ll_{\alpha,\beta} \frac{1}{R} \int_\pi^{2\pi} |f(s+iRe^{i\theta})| d\theta \\
&\ll_{\alpha,\beta} \frac{1}{R} \left( \int_\pi^{5\pi/4} + \int_{5\pi/4}^{7\pi/4} + \int_{7\pi/4}^{2\pi} \right) |f(s+iRe^{i\theta})| d\theta \\
&\ll \frac{1}{R} \bigg( \max_{\theta\in[\pi,5\pi/4]\cup[7\pi/4,2\pi]} |f(\sigma-R\sin{\theta}+i(t+R\cos{\theta}))| \\
&\quad\quad\quad\quad
+ \max_{\theta\in[5\pi/4,7\pi/4]} |f(\sigma-R\sin{\theta}+i(t+R\cos{\theta}))| \bigg) \\
&\leq \frac{1}{R} \left( \max_{\theta\in[\pi,5\pi/4]\cup[7\pi/4,2\pi]} |t+R\cos{\theta}|^{\nu(\sigma-R\sin{\theta})+\epsilon} + M \right) \\
%&\leq \frac{1}{R} \left( \max_{\theta\in[\pi,5\pi/4]\cup[7\pi/4,2\pi]} |t+R\cos{\theta}|^{\nu(\sigma)+\epsilon} + M \right) \\
%&\leq \frac{1}{R} \left( \max_{\theta\in[\pi,5\pi/4]\cup[7\pi/4,2\pi]} |t+R\cos{\theta}|^{\nu(\sigma')+\epsilon} + M \right) \\
&\leq \frac{1}{R} \left( \max_{\theta\in[\pi,5\pi/4]\cup[7\pi/4,2\pi]} |t+R\cos{\theta}|^{1+c-\sigma'+\epsilon} + M \right) \\
%&\ll_{f, \epsilon} \frac{1}{R} \left( R^{1+c-\sigma'+\epsilon}\left(\frac{|t|}{R}+1\right)^{1+c-\sigma'+\epsilon} + M \right) \\
&\ll_{f, \epsilon} R^{c-\sigma'+\epsilon}\left(\frac{|t|}{R}+1\right)^{1+c-\sigma'+\epsilon} + \frac{M}{R},
\end{align*}
thus the integral on $\Gamma_R$ vanishes as $R$ tends to $\infty$.
By simple calculations, we find that
\begin{equation} \label{eq:res-i}
\begin{aligned}
\operatorname{Res}_{\tau=\beta-i\alpha} \frac{f(s+i\tau)}{\alpha^2+(\tau-\beta)^2}
&= \lim_{\tau \rightarrow \beta-i\alpha} (\tau-\beta+i\alpha) \times
\frac{f(s+i\tau)}{\alpha^2+(\tau-\beta)^2} \\
&= \frac{f(s+\alpha+i\beta)}{-2\alpha i}.
\end{aligned}
\end{equation}
Hence we obtain \eqref{eq:1} for all $s \in \mathbb{H}_c$. \\

%
%\item[(b)] 

Suppose that $f$ has a pole at $s=s_0=\sigma_0+it_0$ and $\sigma_0>c$.
Now for $s=\sigma+it \in \mathbb{H}_{c}$, the integrand has three simple poles: $\tau=\beta+i\alpha$, $\tau=\beta-i\alpha$, and $\tau = i(s-s_0)$.
Here we divide the proof into three cases according to the condition whether the pole $\tau = i(s-s_0)$ is below ($c<\sigma<\sigma_0$),
%, see Figures \ref{case11} and \ref{case12}),
above ($\sigma>\sigma_0$),
%see Figure \ref{case2}),
or on the real line ($\sigma=\sigma_0$) of the $\tau$-plane. \\
%, see Figure \ref{case3})
%Here we divide the proof into three cases according to the condition whether the pole $\tau = i(s-s_0)$ is below ($s \in \mathbb{H}_{c} \backslash (\mathbb{L}_{\sigma_0}\cup\mathbb{H}_{\sigma_0})$), above ($s \in \mathbb{H}_{\sigma_0}$), or on the real line ($s \in \mathbb{L}_{\sigma_0}$) of the $\tau$-plane. \\

%
\noindent
\underline{\textbf{Case 1:}} $\operatorname{Im}(i(s-s_0))<0$. \vskip1mm
We first consider when $i(s-s_0) \neq \beta-i\alpha$ and let $\Gamma_R$ be the counterclockwise oriented semicircle of a large radius $R>|s|+|s_0|+\alpha+|\beta|$ centered at the origin. %as in Figure \ref{case11}.

\noindent
Again by applying Cauchy's integral theorem, we can show that
\begin{align*}
l_{\alpha,\beta}(s)
&= \frac{\alpha}{\pi} \int_{\mathbb{R}} \frac{f(s+i\tau)}{\alpha^2 +(\tau-\beta)^2} d\tau \\
&= - 2\alpha i \left( \operatorname{Res}_{\tau=\beta-i\alpha} \frac{f(s+i\tau)}{\alpha^2+(\tau-\beta)^2}
+ \operatorname{Res}_{\tau=i(s-s_0)} \frac{f(s+i\tau)}{\alpha^2+(\tau-\beta)^2} \right).
\end{align*}
Substituting \eqref{eq:res-i} into the above, we obtain
$$
l_{\alpha,\beta}(s)
= f(s+\alpha+i\beta) - 2\alpha i \operatorname{Res}_{\tau=i(s-s_0)} \frac{f(s+i\tau)}{\alpha^2+(\tau-\beta)^2}.
$$
From the Laurent expansion \eqref{eq:laurent} of $f$, we can calculate
\begin{align*}
\frac{f(s+i\tau)}{\alpha^2+(\tau-\beta)^2}
&= \frac{i}{2\alpha} \left(  \sum_{n=0}^\infty \frac{(\tau-i(s-s_0))^n}{(\beta+i\alpha-i(s-s_0))^{n+1}}
- \sum_{n=0}^\infty \frac{(\tau-i(s-s_0))^n}{(\beta-i\alpha-i(s-s_0))^{n+1}} \right) \\
&\quad\quad\times \sum_{n=-m}^\infty a_n i^n (\tau-i(s-s_0))^n.
\end{align*}
Thus
%\begin{align*}
%- 2\alpha i \operatorname{Res}_{\tau=i(s-s_0)} &\frac{f(s+i\tau)}{\alpha^2+(\tau-\beta)^2} \\
%&= \frac{1}{-\beta+i\alpha+i(s-s_0)} \sum_{n=-m}^{-1} a_n i^n \left( \beta-i\alpha-i(s-s_0) \right)^{n+1} \\
%&\quad\quad+ \frac{1}{\beta+i\alpha-i(s-s_0)} \sum_{n=-m}^{-1} a_n i^n \left( \beta+i\alpha-i(s-s_0) \right)^{n+1}.
%\end{align*}
\begin{align*}
- 2\alpha i \operatorname{Res}_{\tau=i(s-s_0)} &\frac{f(s+i\tau)}{\alpha^2+(\tau-\beta)^2} \\
&= \sum_{n=1}^m \frac{a_{-n}}{i^n \left( \beta+i\alpha-i(s-s_0) \right)^n}
- \sum_{n=1}^m \frac{a_{-n}}{i^n \left( \beta-i\alpha-i(s-s_0) \right)^n}.
\end{align*}
Thus combining the above calculations and setting
$$
B_m(s_0) := \sum_{n=1}^m \frac{a_{-n}}{i^n \left( \beta+i\alpha-i(s-s_0) \right)^n}
- \sum_{n=1}^m \frac{a_{-n}}{i^n \left( \beta-i\alpha-i(s-s_0) \right)^n},
$$
we obtain
\begin{align*}
l_{\alpha,\beta}(s)
= \frac{\alpha}{\pi} \int_{\mathbb{R}} \frac{f(s+i\tau)}{\alpha^2+(\tau-\beta)^2} d\tau
= f(s+\alpha+i\beta) + B_m(s_0).
\end{align*}
This is the first equation of \eqref{eq:2}.

Now suppose that $i(s-s_0) = \beta-i\alpha$. %(see Figure \ref{case12}).
This case only appears when $\sigma_0-\alpha>c$.
By calculations similar to the above, we have
$$
l_{\alpha,\beta}(s)
= \frac{\alpha}{\pi} \int_{\mathbb{R}} \frac{f(s+i\tau)}{\alpha^2+(\tau-\beta)^2} d\tau
= - 2\alpha i \operatorname{Res}_{\substack{\tau=\beta-i\alpha\\ = i(s-s_0)}} \frac{f(s+i\tau)}{\alpha^2+(\tau-\beta)^2}.
$$
We consider the Laurent expansion of the integrand near $\tau=\beta-i\alpha=i(s-s_0)$:
\begin{align*}
\frac{f(s+i\tau)}{\alpha^2+(\tau-\beta)^2}
&= \left( \sum_{n=-m}^\infty a_n i^n (\tau-\beta+i\alpha)^n \right) \\
&\quad\quad\quad\quad\quad\quad\quad\quad\times 
\frac{1}{\tau-\beta+i\alpha} \times \frac{1}{-2\alpha i} \times \frac{1}{1 - \frac{\tau-\beta+i\alpha}{2\alpha i}} \\
&= \frac{1}{-2\alpha i} \times \frac{1}{\tau-\beta+i\alpha} \times \frac{1}{1 - \frac{\tau-\beta+i\alpha}{2\alpha i}} \sum_{n=-m}^\infty a_n i^n (\tau-\beta+i\alpha)^n \\
&= \frac{1}{-2\alpha i} \frac{1}{\tau-\beta+i\alpha}
\sum_{n=0}^\infty \left(\frac{\tau-\beta+i\alpha}{2\alpha i}\right)^n
\sum_{n=-m}^\infty a_n i^n (\tau-\beta+i\alpha)^n.
\end{align*}
Hence,
$$
\operatorname{Res}_{\substack{\tau=\beta-i\alpha\\ = i(s-s_0)}} \frac{f(s+i\tau)}{\alpha^2+(\tau-\beta)^2}
= \frac{1}{-2\alpha i} \sum_{n=0}^m \frac{a_{-n}}{(-2\alpha)^n}
$$
and so we obtain the second equation of \eqref{eq:2}. \\

\noindent
\underline{\textbf{Case 2:}} $\operatorname{Im}(i(s-s_0)) > 0$. \vskip1mm

In this case, the integrand of $l_{\alpha,\beta}(s)$ has only one pole in the lower half-plane. %(see Figure \ref{case2}).
Thus by a method similar to the case when $f$ has no pole in $\mathbb{H}_c$, we can show that
\begin{align*}
\int_{\mathbb{R}} \frac{f(s+i\tau)}{\alpha^2+(\tau-\beta)^2} d\tau
&= - 2\pi i \operatorname{Res}_{\tau=\beta-i\alpha} \frac{f(s+i\tau)}{\alpha^2+(\tau-\beta)^2}
\overset{\eqref{eq:res-i}}= - 2\pi i \times \frac{f(s+\alpha+i\beta)}{-2\alpha i} \\
&= \frac{\pi}{\alpha} f(s+\alpha+i\beta).
\end{align*}
Thus
$$
l_{\alpha,\beta}(s)
= \frac{\alpha}{\pi} \int_{\mathbb{R}} \frac{f(s+i\tau)}{\alpha^2+(\tau-\beta)^2} d\tau
= f(s+\alpha+i\beta).
$$
This is the third equation of \eqref{eq:2}. \\

\noindent
\underline{\textbf{Case 3:}} $\operatorname{Im}(i(s-s_0)) = 0$ (only for the case $m=1$).
\vskip1mm

Since $\operatorname{Im}(i(s-s_0)) = 0$ ($s_0=\sigma_0+it_0$), $s$ satisfies $\operatorname{Re}(s)=\sigma_0$ in this case.
For convenience, we write $s=\sigma_0+it$.
In this case, we take the principal value of the integrand as in \cite[p. 367]{ste} and so we obtain
\begin{equation} \label{eq:case3}
\begin{aligned}
\int_{\mathbb{R}} \frac{f(\sigma_0+i(t+\tau))}{\alpha^2+(\tau-\beta)^2} d\tau
= \lim_{\substack{R\rightarrow\infty\\ \epsilon\rightarrow0^+}}
\bigg( \int_{C_R} &- \int_{C_\epsilon} \bigg) \frac{f(\sigma_0+i(t+\tau))}{\alpha^2+(\tau-\beta)^2} d\tau \\
&- 2\pi i \operatorname{Res}_{\tau=\beta-i\alpha} \frac{f(\sigma_0+i(t+\tau))}{\alpha^2+(\tau-\beta)^2},
\end{aligned}
\end{equation}
where $C_R$ and $C_\epsilon$ are the counterclockwise oriented semicircles of radius $R$ ($R>1+|s|+\alpha+|\beta|$) and $\epsilon$ centered at $\tau = t_0-t$ located in the lower half of the $\tau$-plane. %(see Figure \ref{case3}).

As the other cases, the integral along $C_R$ vanishes as $R$ tends to $\infty$.
On the other hand, the integral along $C_\epsilon$ is evaluated as
\begin{align*}
\int_{C_\epsilon} \frac{f(\sigma_0+i(t+\tau))}{\alpha^2+(\tau-\beta)^2} d\tau
&= \int_\pi^{2\pi} \frac{f(\sigma_0+i(t_0+\epsilon e^{i\theta}))}{\alpha^2 + (t_0 - t + \epsilon e^{i\theta} - \beta)^2} i\epsilon e^{i\theta} d\theta \\
&= \int_\pi^{2\pi} \left( \frac{a_{-1}}{i\epsilon e^{i\theta}} + O(1) \right) \frac{i\epsilon e^{i\theta}}{\alpha^2 + (t_0 - t + \epsilon e^{i\theta} - \beta)^2} d\theta
\end{align*}
hence 
$$
\lim_{\epsilon \rightarrow 0^+} \int_{C_\epsilon} \frac{f(\sigma_0+i(t+\tau))}{\alpha^2+(\tau-\beta)^2} d\tau = \frac{a_{-1}\pi}{\alpha^2+(t_0-t-\beta)^2}.
$$
Again from \eqref{eq:res-i},
$$
\operatorname{Res}_{\tau=\beta-i\alpha} \frac{f(\sigma_0+i(t+\tau))}{\alpha^2+(\tau-\beta)^2}
= \frac{f(\sigma_0+\alpha+i(t+\beta))}{-2\alpha i}.
$$
These imply that \eqref{eq:3} holds.
%
%
%\end{enumerate}
%
\end{proof}

Remark that the method used in Case $3$ in the proof does not work if $m>1$.

%%%%%%%%%%%%%%%applications%%%%%%%%%%%%%%
%%%%%%%%%%%%%%%applications%%%%%%%%%%%%%%

%\section{Applications of Theorem \ref{birk} to zeta functions and $L$-functions} \label{sec_exa}

%Before we begin, we again note that Theorem \ref{birk} is applicable to not only zeta functions and $L$-functions in the Selberg class but also to some other zeta functions and $L$-functions.
%For example, as we shall see in the following Example \ref{birk2}, Dirichlet $L$-functions associated with non-primitive characters also satisfy the conditions in Theorem \ref{birk} and we can use Theorem \ref{birk} to calculate their mean values of type \eqref{eq:birk}.
%Another example is given by Hurwitz zeta functions which will not be explained further in this paper.

%We recall that for a given positive real number $\alpha$ and a real number $\beta$, the affine Boolean transformation $T_{\alpha,\beta}$ is as given in \eqref{eq:affboot}.
%From Theorem \ref{birk}, we can easily obtain the following result for zeta functions and $L$-functions.

%%%%%%%%%%%%%%%%SUR%%%%%%%%%%%%%%%%%%%%%%
%%%%%%%%%%%%%%%%SUR%%%%%%%%%%%%%%%%%%%%%%

\section*{Acknowledgements}

The authors would like to express their gratitude to Prof. Kohji Matsumoto, Prof. J\"{o}rn Steuding, Prof. Takashi Nakamura, and Dr. \L ukasz Pa\'{n}kowski for their valuable advice.
%They also would like to deeply thank the referees for their efforts.

%    Bibliographies can be prepared with BibTeX using amsplain,
%    amsalpha, or (for "historical" overviews) natbib style.

\bibliographystyle{amsalpha}

\end{document}